\documentclass[12pt]{article}
\usepackage{amsfonts}
\usepackage{graphicx}
\usepackage{amsmath}
\usepackage{amssymb}
\usepackage{a4wide}

\newcounter{main}

\def\st{{}^\star}  
\def\starr{{}^\star \mathbb{R}}   
\def\starn{{}^\star \mathbb{N}}   

\newtheorem{theorem}{Theorem}[section]
\newtheorem{proposition}[theorem]{Proposition}
\newtheorem{lemma}[theorem]{Lemma}

\newtheorem{definition}{Definition}[section]

\newcommand{\blanksquare}{\,\,\,$\sqcup\!\!\!\!\sqcap$}
\newenvironment{proof}{{\flushleft {\bf Proof: }}}{\blanksquare}

\newcounter{example}
{{\stepcounter{example}}{\flushleft {\bf Example \arabic{example}:}}}%
{\par}

\title{{\bf Sharkovskii order for non-wandering points}}
\author{ Maria Carvalho and Fernando Moreira\thanks{Partially supportedby FCT through
CMUP }}
\date{ }

\begin{document}

\maketitle

\begin{abstract}
For a map $f:I \rightarrow I$, a point $x \in I$ is periodic with period $p \in \mathbb{N}$ if $f^p(x)=x$ and $f^j(x)\not=x$ for all $0<j<p$. When $f$ is continuous and $I$ is an interval, a theorem due to Sharkovskii (\cite{BC}) states that there is an order in $\mathbb{N}$, say $\lhd$, such that, if $f$ has a periodic point of period $p$ and $p \lhd q$, then $f$ also has a periodic point of period $q$. In this work, we will see how an extension of this order $\lhd$ to an ultrapower of the integer numbers yields a Sharkovskii-type result for non-wandering points of $f$.
\end{abstract}

\footnotesize
\noindent\emph{MSC 2000:} primary 37E05; 26E35; secondary 54J05.\\
\emph{keywords:} Sharkovskii order; transfer principles.\\
\normalsize

\begin{section}{Introduction}

Let $f:[a,b] \rightarrow \mathbb{R}$ be a continuous map. A point $x_0 \in [a,b]$ is \emph{non-wandering} if, for each neighborhood $\mathcal{V}$ of $x_0$, there is a positive integer $N$ such that $f^N(\mathcal{V})\, \cap\, \mathcal{V}\neq \emptyset$. If $f^k(\mathcal{V})\, \cap \, \mathcal{V}=\emptyset$ for all $k \in \{1,2,\cdots,N-1\}$, we say that $N$ is a \emph{first return} of $\mathcal{V}$ to itself. This notion is a weak form of recurrence and gathers recurrent points (the ones that are accumulated by their orbits) and the periodic ones. The aim of this work is to generalize Sharkovskii's theorem to non-wandering points, where periodic points are replaced by neighborhoods of the non-wandering point, and periods by return times. The main difficulty of such a formulation lies on the control of the speed of the return and its nearness to the starting point, parameters that, in the case of a periodic orbit, are not only elementary to express but completely determined by 
 the period.

A straight extension of Sharkovskii's result should state that, given sequences $(R_n)_{n\in\mathbb{N}}$ and $(S_n)_{n\in\mathbb{N}}$ of positive integers related by the ultrapower extension of the order $\lhd$, if $f:[a,b] \rightarrow \mathbb{R}$ has a non-wandering point $x_0$ with a fundamental system of neighborhoods whose first returns happen at times $(R_n)_{n\in\mathbb{N}}$, then $f$ has a non-wandering point $x_1$ with a fundamental system of neighborhoods returning at times $(S_n)_{n\in\mathbb{N}}$. This is true, but uninteresting, if the sequences $(R_n)_{n\in\mathbb{N}}$ and $(S_n)_{n\in\mathbb{N}}$ are eventually constant (equal to $c$ and $d$, respectively). In fact, if each neighborhood $\mathcal{V}_{n}$ of a fundamental system at $x_0$ has a first return by the power $f^c$, then there is $y_{n} \in \mathcal{V}_{n}$ (so the sequence $(y_n)_{n \in \mathbb{N}}$ converges to $x_{0}$) such that $f^{c}(y_{n}) \in \mathcal{V}_{n}$ (the sequence $\left(f^{c}(y_{n})\right)_{n \in \mathbb{N}}$ also converges to $x_0$), and, therefore, as $f$ is continuous, $x_0$ is periodic with period $c$; so, as $c \lhd d$, Sharkovskii's theorem informs that $f$ has a periodic point with period $d$, to whom we may easily find a fundamental system of neighborhoods first returning by $f^d$.

However, for more general sequences of returns, our argument demands a strict mastery of the size of the neighborhoods with respect to the amount of time a return needs to occur.

\begin{theorem}\label{maintheorem}
Let $f:[a,b] \rightarrow \mathbb{R}$ be a continuous function and $x_0$ a non-wandering point of $f$. Consider two sequences, $(R_n)_{n\in\mathbb{N}}$ and $(S_n)_{n\in\mathbb{N}}$, of positive integers related by the nonstandard extension of the order $\lhd$. Given $\epsilon > 0$, there exists a sequence of positive real numbers $(\delta\,(\epsilon,S_n))_{n\in\mathbb{N}}$ converging to zero such that, if the fundamental system of neighborhoods $(\mathcal{V}_n)_{n\in\mathbb{N}}\, = \left(\left]x_0-\delta\,(\epsilon,\,S_n), x_0 + \delta\,(\epsilon,\,S_n)\right[\right)_{n\in\mathbb{N}}$ has first returns at times $(R_n)_{n\in\mathbb{N}}$, then there exists a subsequence $(S_{n_k})_{k \in \mathbb{N}}$ and a non-wandering point $x_1$ of $f$ which has a fundamental system of neighborhoods returning at times $(S_{n_k})_{k \in \mathbb{N}}$.
\end{theorem}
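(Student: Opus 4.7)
The plan is to reduce the problem, along a subsequence selected by the ultrafilter, to the classical Sharkovskii theorem for $f$, and then to extract a limit via compactness of $[a,b]$. Fix $n$ in the ultrafilter set where $R_n \lhd S_n$. The first-return hypothesis on $\mathcal{V}_n$ supplies a witness $y_n \in \mathcal{V}_n$ with $f^{R_n}(y_n) \in \mathcal{V}_n$; both $y_n$ and $f^{R_n}(y_n)$ lie within $\delta(\epsilon, S_n)$ of $x_0$, and the rigid condition $f^j(\mathcal{V}_n) \cap \mathcal{V}_n = \emptyset$ for $0 < j < R_n$ strongly constrains the orbit segment $y_n, f(y_n), \ldots, f^{R_n}(y_n)$.

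From this data I would extract a true periodic point $p_n$ of $f$ of prime period $\pi_n$ not exceeding $R_n$ in Sharkovskii order, via an interval-covering / IVT argument on the one-dimensional dynamics of $f$: subintervals built from the orbit segment of $y_n$, combined with the first-return constraint on $\mathcal{V}_n$, should force the existence of an invariant subinterval under a suitable iterate of $f$, and hence a periodic orbit of controlled period. Transitivity of $\lhd$ then yields $\pi_n \lhd S_n$ for $\mathcal{U}$-almost all $n$, so the classical Sharkovskii theorem, applied to $f$ itself, produces a periodic point $z_n$ of $f$ of exact period $S_n$. Compactness of $[a,b]$ extracts a convergent subsequence $z_{n_k} \to x_1 \in [a,b]$, and $x_1$ is non-wandering because it is the limit of periodic points; note that $x_1$ need not coincide with $x_0$, since classical Sharkovskii offers no location control on $z_n$.

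The role of $\delta(\epsilon, S_n)$ is to guarantee the first-return behavior around $x_1$. Using the $S_n$-fold composition of the modulus of uniform continuity of $f$ on $[a,b]$, one chooses $\delta(\epsilon, S_n)$ small enough that any open interval of radius $\delta(\epsilon, S_n)$ centered at a periodic point of period $S_n$ has first return exactly $S_n$: no interior point can revisit the interval in fewer than $S_n$ iterates, while the periodic point itself does so at time $S_n$. Iterating the modulus $S_n$ times forces $\delta(\epsilon, S_n) \to 0$, and the intervals $\mathcal{U}_k = \,]x_1 - \delta(\epsilon, S_{n_k}), x_1 + \delta(\epsilon, S_{n_k})[$, translated slightly to contain $z_{n_k}$ when necessary, form a fundamental system at $x_1$ whose first returns are precisely $S_{n_k}$. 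The principal difficulty is the upgrade step: producing a genuine periodic orbit of $f$ with Sharkovskii-controlled period from the merely near-periodic witness $y_n$. A natural alternative to the one-dimensional covering argument is a nonstandard route, interpreting $[y_n]$ as an infinitesimally-periodic internal point of $\st f$ of hyperperiod $[R_n]$, applying the ultrapower transfer of Sharkovskii's theorem to reach hyperperiod $[S_n]$, and then taking standard parts to identify $x_1$.
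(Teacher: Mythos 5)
Your overall architecture (reduce to Sharkovskii along an ultrafilter-large set of indices, then extract a limit point $x_1$ by compactness) resembles the paper's, but your crucial step is different and, as you yourself flag, is left unproved --- and it is precisely the step the paper's construction is designed to circumvent. You propose to extract from the near-periodic witness $y_n$ (with $y_n$ and $f^{R_n}(y_n)$ both in $\mathcal{V}_n$) a genuine periodic point of $f$ \emph{itself} whose period $\pi_n$ precedes $S_n$ in the order $\lhd$, and then invoke classical Sharkovskii for $f$. No argument is given for this, and it is the whole difficulty: the covering-interval graph built from the orbit segment $y_n, f(y_n), \dots, f^{R_n}(y_n)$ does not close into a loop, because the orbit does not actually return to $y_n$; without a closed loop the standard IVT/covering machinery forces no periodic orbit whose period is tied to $R_n$ in the Sharkovskii order. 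Note also that your route would prove a strictly stronger theorem (that $f$ has honest periodic points of exact period $S_n$), whereas the stated conclusion is only a non-wandering point with (not necessarily first) returns at times $S_{n_k}$ --- a strong hint that the upgrade step cannot be carried out for $f$ itself. The paper instead perturbs: it constructs $g_n = T_n \circ f$ with $\|f-g_n\| < \delta(\epsilon,S_n)$ for which $y_n$ is \emph{exactly} periodic of period $R_n$, applies the transferred Sharkovskii theorem to the internal dynamical system $[(g_n)_{n\in\mathbb{N}}]$ to obtain periodic points $z_n$ of $g_n$ of period $S_n$, and only then passes to an accumulation point $x_1$ of $(z_n)$.

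Your account of the role of $\delta(\epsilon,S_n)$ is also off. You want it to force the interval around $x_1$ to have first return exactly $S_n$; but nothing prevents other points of a small interval around a period-$S_n$ point from returning sooner (e.g.\ a nearby fixed point), and the theorem does not claim first returns at $x_1$ in any case. In the paper, $\delta(\epsilon,S_n)$ acts on the hypothesis side: it bounds the size of the perturbation (since $|y_n - f^{R_n}(y_n)|$ is controlled by the diameter of $\mathcal{V}_n$), and it is calibrated by composing the modulus of uniform continuity of $f$ with itself $S_n$ times so that $\|f-g_n\|<\delta(\epsilon,S_n)$ implies $\|f^k-g_n^k\|<\epsilon$ for all $k\le S_n$; this is what converts the exact $S_n$-periodicity of $z_n$ under $g_n$ into an $\epsilon$-return of $z_n$ under $f^{S_n}$, hence a return of a neighborhood of $x_1$ to itself. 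Your closing ``nonstandard alternative'' --- treating $[y_n]$ as an infinitesimally periodic point of $\st f$ --- has the same gap: the transferred Sharkovskii theorem requires an exact periodic point of an internal map, and infinitesimal periodicity is not enough; the local perturbation is exactly what repairs this.
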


In this work, we apply techniques coming from nonstandard analysis in such a way that, infinite returns of a set to itself may be described by a periodic point of a suitable dynamics acting on the set of hyperreals, $\starr$, to where the statement of Sharkovskii's theorem may be conveyed. The proof goes as follows: starting with a non-wandering point $x_0$, we create, through small local perturbations of $f$, a sequence of periodic points of a countable family of dynamics transferable to $\starr$; here, the corresponding dynamical system is continuous and has a periodic point with period equal to the hyperinteger represented by the sequence $(R_n)_{n\in\mathbb{N}}$; then, we apply the nonstandard version of Sharkovskii's theorem, getting another periodic point with period given by the hyperinteger associated to $(S_n)_{n\in\mathbb{N}}$; finally, this information is projected on $f$ and $\mathbb{R}$, thus emerging the requested non-wandering point $x_1$. This strategy has its
  cost: in general, we can not ensure that $x_1 \neq x_0$ nor that $(S_{n_k})_{k \in \mathbb{N}}$ are first returns.

\medskip

\end{section}

\begin{section}{Preliminaries from nonstandard analysis}

In this section we present the nonstandard tools used in the proof of Theorem \ref{maintheorem}. We will not need all the strength of a general nonstandard framework; therefore we will follow the approach to basic nonstandard analysis from \cite{G}.

\begin{subsection}{The space of hyperreals}

The hyperreals, denoted by $\starr$, is the structure obtained by an ultrapower  construction of the real numbers $\mathbb R$. The building process begins considering the space of real sequences, $\mathbb{R}^\mathbb{N}$, and the equivalence relation given by
$$(\alpha_n)_{n\in\mathbb{N}} \equiv  (\beta_n)_{n\in\mathbb{N}} \Leftrightarrow \{n\in\mathbb{N}:\, \alpha_n=\beta_n\} \text{ is big},
$$

\noindent where \emph{big} means that the set belongs to a fixed  ultrafilter\footnote{Given a nonempty subset $\mathcal{S}$, an ultrafilter of $\mathcal{S}$ is a collection $\mathfrak{F}$ of subsets of $\mathcal{S}$ such that $\emptyset \notin \mathfrak{F}$; $A,\, B \in \mathfrak{F} \Rightarrow A\,\cap \,B \in \mathfrak{F}$; $A \in \mathfrak{F} \text{ and } A \subseteq B \subseteq \mathcal{S} \Rightarrow B \in \mathfrak{F}$; $\forall A \subseteq \mathcal{S} \text{ either } A \in \mathfrak{F} \text{ or } \mathcal{S}-A \in \mathfrak{F} \text{, but not both by the two first properties}.$} of $\mathbb{N}$ that contains all the co-finite sets. In the sequel, the quotient space $\mathbb{R}^\mathbb{N}/\equiv $ will be denoted by $\starr$. Within it, each real number $\alpha$ is identified with the equivalence class of the constant sequence equal to $\alpha$, say $[(\alpha)_{n \in \mathbb{N}}]$. Besides, it may be endowed with a sum operation
$$[(\alpha_n)_{n \in \mathbb{N}}]\oplus[(\beta_n)_{n \in \mathbb{N}}]=[(\alpha_n + \beta_n)_{n \in \mathbb{N}}],$$

\noindent a product
$$[(\alpha_n)_{n \in \mathbb{N}}]\otimes[(\beta_n)_{n \in \mathbb{N}}]=[(\alpha_n \times \beta_n)_{n \in \mathbb{N}}]$$

\noindent and an order defined by
$$[(\alpha_n)_{n \in \mathbb{N}}] \prec [(\beta_n)_{n \in \mathbb{N}}] \Leftrightarrow \{n \in \mathbb{N}: \alpha_n < \beta_n\} \text{ is big},$$
$$[(\alpha_n)_{n \in \mathbb{N}}] \preceq [(\beta_n)_{n \in \mathbb{N}}] \Leftrightarrow \{n \in \mathbb{N}: \alpha_n \leq \beta_n\} \text{ is big}.$$

\noindent With this structure, $(\starr,\oplus, \otimes, \prec, \preceq)$ is an ordered field with infinitesimals\footnote{A hyperreal $[(r_n)_{n \in \mathbb{N}}]$ is an infinitesimal if and only if $\lim_{n \rightarrow \infty}\, r_n=0$.} and unlimited numbers.\footnote{A hyperreal $[(r_n)_{n \in \mathbb{N}}]$ is unlimited if and only if $\lim_{n \rightarrow \infty}\, r_n=\infty$; it is the inverse of an infinitesimal.}
\end{subsection}

\medskip

\begin{subsection}{Extension of sets and functions}

Let $A$ be a subset of $\mathbb{R}$. By $\st A$, we denote the subset of $\starr$ defined by the condition
$$[(\alpha_n)_{n\in\mathbb{N}}] \in \st A \Leftrightarrow \{n\in\mathbb{N}:\alpha_n\in A\} \text{ is big}.$$

Given a map $f:A\,\subseteq\,\mathbb{R}\,\rightarrow\,\mathbb{R}$, its extension to $\st A$ is the function $\st f: \, \st A\,\rightarrow\,\starr$ that assigns to each $[(\alpha_n)_{n\in\mathbb{N}}]\in \st A$ the hyperreal $[(f(\alpha_n))_{n\in\mathbb{N}}]$. There are hyperfunctions which are not extensions of maps of a real variable (so called \emph{not internal}), as the map that assigns to each bounded hyperreal $\theta$ its \emph{shadow} \footnote{The shadow of a bounded hyperreal $\theta$ is the unique real number $t$ such that $\theta \oplus [(-t)_{n \in \mathbb{N}}]$ is an infinitesimal.}.

\medskip

With these two notions, we may extend the concepts of sequence and dynamical system.

\begin{enumerate}
\item
A \emph{hypersequence} is a map $S:\starn\,\rightarrow\,\starr$.

Therefore, on the sequel, the term $S_N$ is also defined for hyperintegers $N$.
\item
A \emph{discrete dynamical system} in $\starr$ is a map
$F:\starn\times\starr\,\rightarrow\,\starr$ such that, for each $X\in\starr$ and all pair $N,M\in\starn$, we have
$F(N\,\oplus \,M,X)=F(N,F(M,X))$.

Accordingly, an element $\alpha$ in $\starr$ is \emph{periodic} by $F$ with period $P\in\starn$ if and only if $F(P,\alpha)=\alpha$ and $F(Q,\alpha)\neq \alpha$ for every $Q \in \starn$ such that $[(1)_{n \in \mathbb{N}}] \preceq Q \prec P$.
\end{enumerate}

\end{subsection}

\begin{subsection}{Nonstandard closeness}

Two hyperreals $\theta$ and $\vartheta$ are infinitely close to each other (abbreviated into $\theta \approx \vartheta$) if $\theta-\vartheta$ is an infinitesimal. This condition defines in $\starr$ an equivalence relation, say $\propto$, whose classes are called \emph{halos}. This notion is useful to reformulate real assertions in nonstandard notation, or by suggesting on how to extend them to the nonstandard realm. For instance:

\begin{enumerate}
\item
A map $f:\mathbb{R}\,\rightarrow\,\mathbb{R}$ is \emph{continuous} at $c\in\mathbb{R}$ if and only if for any $x\in\mathbb{R}$ such that $[(x)_{n \in \mathbb{N}}]\approx [(c)_{n \in \mathbb{N}}]$ we have $[(f(x))_{n \in \mathbb{N}}]\approx [(f(c))_{n \in \mathbb{N}}]$.

Similarly, a map $F$ on the hyperreals is continuous at a point $C$ if and only if $F\bigl(\textit{halo}(C)\bigr) \subseteq \textit{halo}\bigl(F(C)\bigr).$
\item
A sequence of real numbers $(\alpha_n)_{n\in\mathbb{N}}$ \emph{converges} to $L\in\mathbb{R}$ is and only if $[(\alpha_n)_{n \in \mathbb{N}}] \approx [(L)_{n \in \mathbb{N}}]$.

Analogous definition in $\starr.$
\item
$L\in\mathbb{R}$ is an \emph{accumulation point} of the real sequence $(\alpha)_{n\in\mathbb{N}}$ if and only if there is a non-bounded\footnote{A hyperreal $[(t_n)_{n \in \mathbb{N}}]$ is bounded if and only if there is a real $M$ such that the set $\{n \in \mathbb{N}: t_n<M \}$ is big.} element in $\starn$ such that $\alpha_N \approx [(L)_{n \in \mathbb{N}}].$
\item
Given a map $f:A\,\subseteq\,\mathbb{R}\,\rightarrow\,\mathbb{R}$, a point $x_0 \in A$ is \emph{non-wandering} if there are $z \in \starr$ and $N \in \starn$, say $N=[(m_1,m_2,\cdots,m_k,\cdots)]$, such that $z \approx [(x_0)_{n \in \mathbb{N}}]$ and $f^N(z) \approx [(x_0)_{k \in \mathbb{N}}]$, where $f^N(z)=[(f^{m_1}(z), f^{m_2}(z), \cdots, f^{m_k}(z), \cdots)]$.

If we are allowed to choose $z=[(x_0)_{n \in \mathbb{N}}]$, then $x_0$ is said to be \emph{recurrent}.

If $z=[(x_0)_{n \in \mathbb{N}}]$ and $N=[(p)_{n \in \mathbb{N}}]$ for a positive integer $p$, then $x_0$ is \emph{periodic} with period $p$.
\end{enumerate}
\end{subsection}

\begin{subsection}{Transfer principles}
The construction of $\starr$ endows it with logical principles that allow the exchange of notions and valid properties between $\mathbb{R}$ and $\starr$. More precisely, we have:

\begin{enumerate}
\item \emph{\textbf{The $\star$-transfer.}}

\noindent The field $\mathbb{R}$ has a complete structure $\mathcal{R}=\ll \mathbb{R},\,Rel(\mathbb{R}),\,Fun(\mathbb{R})\gg$, where $Rel(\mathbb{R})$ represents all the finite relations on $\mathbb{R}$ and $Fun(\mathbb{R})$ all the real maps with real variable. Accordingly, to $\starr$ we assign the structure $\st \mathcal{R}=\ll\starr,\{\st \mathcal{P}:\mathcal{P}\in Rel(\mathbb{R})\},\{\st f:f\in Fun(\mathbb{R})\}\gg$. This way, $\st \mathcal{R}$ is the natural extension of all the relations and maps in $\mathbb{R}$. However, this mere extension does not ensure completeness of $\st \mathcal{R}$ since, for instance, the map $f:\starr\,\rightarrow\,\starr$ given by $f(x)=\epsilon^x$, where $\epsilon$ is an infinitesimal, is not an extension of any element of $Fun(\mathbb{R})$. Anyway, this is a mechanism that transforms a formula $\varphi$ of the language $\mathcal{L}_\mathbb{R}$ with relational symbols $\mathcal{P}$ and functionals $\mathcal{F}$ into another formula of a la
 nguage $\mathcal{L}_{\starr}$ from the structure $\st \mathcal{R}$, replacing $\mathcal{P}$ by $\st \mathcal{P}$ and $\mathcal{F}$ by $\st \mathcal{F}$. For example, the proposition
$$[\forall x\in\mathbb{R}\,\,\exists n\in\mathbb{N}\,\,:\,x<n]$$
yields
$$[\forall X\in\starr\,\,\exists N\in\starn\,\,:\,X\,\prec\, N].$$

\medskip

\item \emph{\textbf{The universal transfer and its dual.}}

\noindent The principle that rules this operator asserts that \emph{if a property is valid for all reals, then it is valid for all hyperreals}. That is,
$(\forall x\in\mathbb{R})\,\varphi \rightarrow (\forall X\in\starr)\,\st \varphi$.

\medskip

Dually, \emph{if there exists an hyperreal verifying some property, than there is a real one that also satisfies it}. That is, $(\exists X\in\starr)\,\st \varphi \Rightarrow (\exists x\in\mathbb{R})\,\varphi.$

\medskip

The main example we have in mind is Sharkovskii's theorem. Its full extension to the hyperreals depends on the lift of Sharkovskii's ordering
$$
3 \lhd  5 \lhd  7 \lhd  \ldots  \lhd  (2n+1)  \lhd  (2n+3)  \lhd  \ldots $$
$$
\ldots \lhd 2 \times 3  \lhd  2 \times 5  \lhd  2 \times 7  \lhd  \ldots  \lhd 2 \times  (2n+1)  \lhd  2 \times ( 2n+3 ) \lhd  \ldots $$
$$
\ldots \lhd 2^2 \times 3  \lhd  2^2 \times 5  \lhd  2^2 \times 7  \lhd  \ldots  \lhd 2^2 \times  (2n+1)  \lhd  2^2 \times ( 2n+3 ) \lhd  \ldots $$
$$\vdots$$
$$
\ldots \lhd 2^\ell \times 3  \lhd  2^\ell \times 5  \lhd  2^\ell \times 7  \lhd  \ldots  \lhd 2^\ell \times  (2n+1)  \lhd  2^\ell \times ( 2n+3 ) \lhd  \ldots $$
$$\vdots$$
$$
\ldots \lhd 2^\ell \lhd  2^{\ell-1} \lhd \ldots  \lhd  2^3  \lhd  2^2   \lhd  1$$
to the nonstandard setting. The $\star$-transfer of the positive integers and corresponding positions in this ordering
$$[(3)_{n \in \mathbb{N}}] \,\, \st \lhd [(5)_{n \in \mathbb{N}}] \,\, \st \lhd \cdots \st \lhd [(2)_{n \in \mathbb{N}}]\otimes [(3)_{n \in \mathbb{N}}] \,\, \st \lhd [(2)_{n \in \mathbb{N}}]\otimes [(5)]_{n \in \mathbb{N}} \,\, \st \lhd \cdots \st \lhd \cdots$$
$$\cdots \st \lhd [(2^n)_{n \in \mathbb{N}}] \,\, \st \lhd [(2^{n-1})_{n \in \mathbb{N}}] \,\, \st \lhd \cdots [(2)_{n \in \mathbb{N}}] \,\, \st \lhd [(1)_{n \in \mathbb{N}}]$$
works finely and suggests how to pursue.

\begin{definition}
Two elements $\theta=[(t_n)_{n \in \mathbb{N}}]$ and $\vartheta=[(u_n)_{n \in \mathbb{N}}]$ in $\starn$ verify the relation $\theta \,\, \st \lhd \vartheta$
if and only if the set $\{n \in \mathbb{N}:\,t_n \lhd u_n\}$ is big.
\end{definition}

Moreover, as $\lhd$ is complete in $\mathbb{N}$, the universal transfer guarantees its extension to all other hypernatural numbers of $\starn$. Thus the nonstandard version of Sharkovskii's theorem states that:

\begin{theorem}\label{theoremSark}
Let $F:\starn\times\starr\,\rightarrow\,\starr$ be a dynamical system in $\starr$. Consider two sequences of positive integers, say $[(R_n)_{n \in \mathbb{N}}]$ and $[(S_n)_{n \in \mathbb{N}}]$, verifying $[(R_n)_{n \in \mathbb{N}}] \,\, \st \lhd  [(S_n)_{n \in \mathbb{N}}]$. If $F$ has a periodic point with period $[(R_n)_{n \in \mathbb{N}}]$, then $F$ has a periodic point with period $[(S_n)_{n \in \mathbb{N}}].$
\end{theorem}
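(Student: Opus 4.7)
The plan is to obtain Theorem~\ref{theoremSark} as a direct consequence of the classical Sharkovskii theorem via the $\st$-transfer principle described above. The key step is to cast Sharkovskii's theorem as a single first-order sentence $\varphi$ in the language $\mathcal{L}_\mathbb{R}$ of the structure $\mathcal{R}$. Taking as primitives a continuous generator $f$ on $\mathbb{R}$, the ordering $\lhd$ as a binary relation symbol on $\mathbb{N}$, and quantifiers over $\mathbb{R}$ and $\mathbb{N}$, the sentence $\varphi$ asserts: for every continuous $f$ and every pair $(p,q)\in \mathbb{N}\times\mathbb{N}$ with $p\lhd q$, if there exists $x$ with $f^p(x)=x$ and $f^j(x)\neq x$ for every $j$ with $1\le j<p$, then there exists $y$ verifying the analogous conditions with $q$ in place of $p$.

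Having fixed $\varphi$, I would invoke the $\st$-transfer to produce its nonstandard counterpart $\st\varphi$. Under this translation every symbol is promoted to its starred version: $f$ becomes an internal hyperfunction whose iterates give a dynamical system $F:\starn\times\starr\to\starr$; the relation $\lhd$ becomes $\st\lhd$; the integer parameters $p$ and $q$ become hyperintegers $P=[(R_n)_{n\in\mathbb{N}}]$ and $Q=[(S_n)_{n\in\mathbb{N}}]$ in $\starn$; and the bounded minimality clause $\forall j\in\mathbb{N},\; 1\le j<p \Rightarrow f^j(x)\neq x$ turns into $\forall J\in\starn,\; [(1)_{n\in\mathbb{N}}]\preceq J \prec P \Rightarrow F(J,\alpha)\neq\alpha$. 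Matching this against the nonstandard definition of periodicity recalled in the Preliminaries, $\st\varphi$ reads exactly as the statement of Theorem~\ref{theoremSark}.

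The subtle point, where I expect care to be needed, is the bridging between the classical and nonstandard formulations of a dynamical system: the hyperreal statement speaks of a semigroup action $F$, whereas the classical Sharkovskii theorem is phrased for a continuous generator $f$ and its iterates. I would handle this by identifying $F$ with iteration of an internal generator $\st f$, so that continuity is preserved under $\st$-extension and the hypothesis of the transferred Sharkovskii sentence is met, while the semigroup identity $F(N\oplus M,X)=F(N,F(M,X))$ corresponds to the iteration law $\st f^{N+M}=\st f^{N}\circ \st f^{M}$. Once these identifications are in place, $\st\varphi$ coincides with Theorem~\ref{theoremSark}, completing the argument.
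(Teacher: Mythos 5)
Your overall strategy---formalize the classical Sharkovskii theorem as a sentence $\varphi$ of $\mathcal{L}_\mathbb{R}$ and apply transfer---is exactly what the paper does; indeed the paper offers no separate proof of Theorem~\ref{theoremSark} beyond the preceding remark that the universal transfer extends $\lhd$ and the theorem to $\starn$. Your handling of the minimality clause (bounded quantifier $1\le j<p$ becoming $[(1)_{n\in\mathbb{N}}]\preceq J\prec P$ over all hyperintegers $J$) is the right observation and matches the paper's definition of a periodic point of a hyperreal dynamical system.

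There is, however, one point where your bridging step would fail as written. In your last paragraph you identify $F$ with the iteration of an internal generator $\st f$, i.e.\ the star-extension of a single standard map $f$. That class is strictly smaller than the class of internal continuous hyperfunctions: a general internal generator is the class $[(g_n)_{n\in\mathbb{N}}]$ of a sequence of \emph{distinct} continuous real maps, and it is precisely such a generator ($\mathcal{G}=[(g_n)]$, with $g_n$ the perturbations of Proposition~\ref{Prop1}) to which the paper later applies Theorem~\ref{theoremSark}. If your transferred sentence only covers generators of the form $\st f$, the theorem becomes useless for the paper's main argument. The repair is the one already latent in your first paragraph: keep the quantifier over $f$ inside $\varphi$ (the structure $\mathcal{R}$ includes $Fun(\mathbb{R})$, so this is legitimate), so that $\st\varphi$ ranges over all \emph{internal} continuous hyperfunctions; equivalently, argue coordinatewise via {\L}o{\'s}: for each $n$ in a big set, $g_n$ has a periodic point $z_n$ of period $S_n$, whence $[(z_n)]$ is periodic of period $[(S_n)]$ for the induced system. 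Finally, note that the theorem as literally stated (arbitrary $F:\starn\times\starr\to\starr$, with no internality or continuity hypothesis) cannot follow from transfer and is false in that generality; both you and the paper are tacitly proving it only for internal systems generated by continuous hyperfunctions on a hyperinterval, and it would be worth saying so explicitly.
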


\end{enumerate}
\end{subsection}
\end{section}

\begin{section}{The size of the neighborhoods}

In the sequel we will consider the uniform norm $\|\,g\,\|=\max\,\{|g(x)|:a\leq x\leq b\}$ on the space of real continuous maps defined on the interval $[a,b]$. Let $f:[a,b]\,\rightarrow\,\mathbb{R}$ be such a map. As $f$ is uniformly continuous in $[a,b]$, given $\tau>0$ there exists $\eta(\tau)>0$ such that, if $x$ and $y$ belong to $[a,b]$ and $|x-y|<\eta(\tau)$, then $|f(x)-f(y)|<\frac{\tau}{2}$.

\medskip

Consider a  non-wandering point $x_0$ of $f$, a sequence $(S_n)_{n\in\mathbb{N}}$ of positive integers and $\epsilon>0$.

\begin{definition}\label{delta}
For each $n \in \mathbb{N}$, the number $\delta(\epsilon,S_n)$ is the maximum of the set with $S_n + 1$ elements given by
$$\Bigl\{\frac{1}{n},\frac{\epsilon}{2},\frac{\eta(\epsilon)}{2},\frac{\eta(\eta(\epsilon))}{2},\cdots,\frac{\eta(\eta(\eta(\cdots\eta(
\epsilon)\cdots)))}{2}\Bigr\}.$$
\end{definition}

\noindent Notice that, this way, $0<\delta(\epsilon,S_n)\leq \frac{1}{n}.$ Besides,

\medskip

\begin{lemma}\label{Lema1}
Given $n \in \mathbb{N}$, if $\|f-g\|<\delta(\epsilon,S_n)$, then $\|f^k-g^k\|<\epsilon$ for all $k \in \{1,2,\cdots,S_n\}$.
\end{lemma}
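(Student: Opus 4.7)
The plan is to reduce the lemma to an induction in which the target bound for the $k$-th iterate is not $\epsilon$ itself but a much smaller quantity that shrinks as $k$ decreases, so that the modulus-of-continuity argument chains cleanly up to the top iterate. Concretely, I would fix an arbitrary $m\in\{1,\dots,S_n\}$ and establish the strengthened statement
\[
\|f^k-g^k\|<\eta^{(m-k)}(\epsilon)\qquad\text{for every }k=1,\dots,m,
\]
where $\eta^{(0)}(\epsilon):=\epsilon$ and $\eta^{(j)}$ denotes $\eta$ iterated $j$ times. Specialising to $k=m$ gives $\|f^m-g^m\|<\epsilon$, and since $m$ is arbitrary, the lemma follows.

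The induction proceeds upward in $k$. For the base case $k=1$, I use that $\delta(\epsilon,S_n)$ is dominated by every entry $\eta^{(j)}(\epsilon)/2$ of the set in Definition \ref{delta} (for $j=0,1,\dots,S_n-1$), which immediately gives $\|f-g\|<\delta(\epsilon,S_n)\leq \eta^{(m-1)}(\epsilon)/2<\eta^{(m-1)}(\epsilon)$.

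For the inductive step $k\mapsto k+1$ (while $k+1\leq m$), I would split, for each $x\in[a,b]$,
\[
|f^{k+1}(x)-g^{k+1}(x)|\leq |f(f^{k}(x))-f(g^{k}(x))|+|f(g^{k}(x))-g(g^{k}(x))|.
\]
The second summand is at most $\|f-g\|<\eta^{(m-k-1)}(\epsilon)/2$. For the first summand, the inductive hypothesis supplies $|f^k(x)-g^k(x)|<\eta^{(m-k)}(\epsilon)=\eta\bigl(\eta^{(m-k-1)}(\epsilon)\bigr)$, so the defining property of $\eta$ bounds it by $\eta^{(m-k-1)}(\epsilon)/2$. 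Summing the two halves yields exactly $\eta^{(m-k-1)}(\epsilon)$, closing the induction.

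There is no genuine obstacle here; the whole content of the lemma is the bookkeeping that matches indices, namely that the list of $S_n+1$ numbers in Definition \ref{delta} furnishes a bound of the form $\eta^{(j)}(\epsilon)/2$ for every $j\in\{0,1,\dots,S_n-1\}$, so that both the base case and each inductive step have the required headroom. Uniform continuity of $f$ on the compact interval $[a,b]$ is used implicitly each time the modulus $\eta$ is invoked.
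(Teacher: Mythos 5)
Your proof is correct and follows essentially the same route as the paper: the identical decomposition $f^{k+1}-g^{k+1}=(f\circ f^k-f\circ g^k)+(f\circ g^k-g\circ g^k)$, with the first term controlled by the modulus $\eta$ and the second by $\|f-g\|$. The only difference is presentational — you state explicitly the strengthened induction hypothesis $\|f^k-g^k\|<\eta^{(m-k)}(\epsilon)$ that the paper leaves implicit after working out the cases $k=2,3$ (and, like the paper, you correctly read Definition \ref{delta} as making $\delta(\epsilon,S_n)$ a lower bound for every entry of that set, despite the word ``maximum'' there).
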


\begin{proof} Let us fix $n$ and the corresponding $S_n$. If $k=1$, the assertion is a direct consequence of the fact that $\delta(\epsilon,S_n)<\epsilon$. For $k=2$, as $\|f-g\|<\delta(\epsilon,S_n)$, we know that $|f-g\|<\frac{\epsilon}{2}$ and $\|f-g\|<\eta(\epsilon)$. Therefore, by the definition of $\eta(\epsilon)$, we have $\|f\circ f-f\circ g\|<\frac{\epsilon}{2}$; besides, $\|f\circ g-g\circ g\|\leq\|f-g\|<\frac{\epsilon}{2}$. So
$$\|f^2-g^2\|\leq\|f\circ f-f\circ g\|+\|f\circ g-g\circ g\|<\epsilon.$$
Similarly, from $\|f-g\|<\delta(\epsilon,S_n)$, we deduce that $\|f-g\|<\frac{\epsilon}{2}$, $\|f-g\|<\frac{\eta(\epsilon)}{2}$ and $\|f-g\|<\eta(\eta(\epsilon))$, which together imply, as just checked, that
$$\|f^2-g^2\|\leq\|f\circ f-f\circ g\|+\|f\circ g-g\circ g\|<\frac{\eta(\epsilon)}{2}+\frac{\eta(\epsilon)}{2}=\eta(\epsilon),$$
and so
$$\|f^3-g^3\|\leq\|f\circ f^2-f\circ g^2\|+\|f\circ g^2-g\circ g^2\|<\frac{\epsilon}{2}+\frac{\epsilon}{2}.$$
The argument proceeds inductively.
\end{proof}
\end{section}

\begin{section}{A related dynamical system in $\starr$}

Let $f:[a,b] \rightarrow \mathbb{R}$ be a continuous function and $x_0$ a non-wandering point of $f$. Consider two sequences, $(R_n)_{n\in\mathbb{N}}$ and $(S_n)_{n\in\mathbb{N}}$, of positive integers related by $\st \lhd$ and assign to them, according to Definition \ref{delta}, the sequence $\left(\delta(\epsilon,S_n)\right)_{n \in \mathbb{N}}$. Take the fundamental system of neighborhoods of $x_0$ given by
$$(\mathcal{V}_n)_{n\in\mathbb{N}}\, = \left(]x_0-\delta\,(\epsilon,\,S_n), x_0 + \delta\,(\epsilon,\,S_n)[\right)_{n\in\mathbb{N}},$$
whose first returns happen at times $(R_n)_{n\in\mathbb{N}}$. Since $f^{R_n}(\mathcal{V}_n)\,\cap\,\mathcal{V}_n\neq\emptyset$, we may choose a sequence $(y_n)_{n\in\mathbb{N}}$ of elements of $[a,b]$ such that, for all $n$, $y_n$ and $f^{R_n}(y_n)$ both belong to $\mathcal{V}_n$.


\medskip

\begin{proposition}\label{Prop1}
For each $n$, there is a continuous map $g_n:[a,b]\,\rightarrow\,\mathbb{R}$ such that $g^{R_n}_n(y_n)=y_n$ and $\|f-g_n\|<\delta(\epsilon,S_n)$.
\end{proposition}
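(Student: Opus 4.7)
The plan is to construct $g_n$ as a local perturbation of $f$ confined to a small neighborhood of the last orbit point $z_n := f^{R_n-1}(y_n)$, adjusting $f$ there so that $g_n(z_n) = y_n$ and thereby closing the finite $f$-orbit $y_n, f(y_n), \ldots, z_n$ into a periodic $g_n$-cycle of length $R_n$. Outside that neighborhood $g_n$ will coincide with $f$, so both the continuity of $g_n$ and the sup-norm $\|f-g_n\|$ are controlled by a single pointwise jump at $z_n$.

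Before perturbing, I would verify that the orbit $y_n, f(y_n), \ldots, f^{R_n-1}(y_n)$ consists of $R_n$ pairwise distinct points. The first-return hypothesis $f^i(\mathcal{V}_n)\cap \mathcal{V}_n = \emptyset$ for $1 \leq i \leq R_n-1$ is the key input: any coincidence $f^j(y_n) = f^k(y_n)$ with $0 \leq j < k \leq R_n-1$ would make the orbit eventually periodic of period $p = k-j$ from step $j$ onward, forcing $f^{R_n}(y_n) = f^{\ell}(y_n)$ for some $\ell \in \{j, \ldots, k-1\}$. If $\ell \geq 1$, this contradicts $f^{\ell}(y_n) \notin \mathcal{V}_n$; if instead $\ell = j = 0$, then $f^k(y_n) = y_n \in \mathcal{V}_n$, again contradicting first-return. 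With distinctness in hand, I can pick an open interval $J_n \ni z_n$ disjoint from the finite set $\{y_n, f(y_n), \ldots, f^{R_n-2}(y_n)\}$.

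On $J_n$ I define $g_n$ as a continuous piecewise linear bump that equals $f$ on $\partial J_n$ and takes the value $y_n$ at $z_n$; outside $J_n$, set $g_n = f$. Continuity is immediate. Because no intermediate iterate $f^i(y_n)$ with $0 \leq i \leq R_n - 2$ meets $J_n$, an induction yields $g_n^i(y_n) = f^i(y_n)$ for all such $i$, and therefore $g_n^{R_n}(y_n) = g_n(z_n) = y_n$, as required.

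The step I expect to be the main obstacle is the sharpness of the norm estimate. By locality, $\|f - g_n\|$ equals the peak value $|y_n - f(z_n)| = |y_n - f^{R_n}(y_n)|$; since both points only lie in the interval $\mathcal{V}_n$ of radius $\delta(\epsilon,S_n)$, the triangle inequality gives for free only $\|f - g_n\| < 2\,\delta(\epsilon,S_n)$. To reach the asserted bound $\|f - g_n\| < \delta(\epsilon,S_n)$ one must choose $y_n$ carefully inside $\mathcal{V}_n \cap f^{-R_n}(\mathcal{V}_n)$ — for example by showing that some half $(x_0-\delta,x_0)$ or $(x_0,x_0+\delta)$ of $\mathcal{V}_n$ meets its own $f^{R_n}$-image, or by applying an intermediate-value argument to $y \mapsto f^{R_n}(y) - y$ on a connected component of that intersection to produce either an honest fixed point of $f^{R_n}$ (in which case $g_n = f$ works with no perturbation) or a $y_n$ with $y_n$ and $f^{R_n}(y_n)$ on the same side of $x_0$. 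This refinement in the selection of $y_n$ is the delicate point; everything else in the construction is routine.
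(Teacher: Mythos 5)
Your construction is in the same spirit as the paper's --- a bump supported away from the rest of the finite orbit that closes $y_n,f(y_n),\dots,f^{R_n-1}(y_n)$ into a $g_n$-cycle --- but you place the perturbation differently. The paper takes $g_n=T_n\circ f$ where $T_n(t)=t+\bigl[y_n-f^{R_n}(y_n)\bigr]\phi_n(t)$ and $\phi_n$ is supported in a small interval $J_n\subset\mathcal{V}_n$ containing both $y_n$ and $f^{R_n}(y_n)$; since the first-return hypothesis puts every intermediate iterate $f^{\ell}(y_n)$, $1\leq \ell<R_n$, outside $\mathcal{V}_n$, the map $T_n$ fixes them automatically and no distinctness of orbit points is needed. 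You instead perturb in the domain near $z_n=f^{R_n-1}(y_n)$, which forces you to check that $z_n$ avoids the earlier orbit points; your derivation of distinctness from the first-return property is correct, so this works, at the cost of an extra (correct) lemma the paper's arrangement sidesteps. One small thing to nail down in your version: if $g_n$ is defined by linear interpolation between the boundary values of $f$ on $\partial J_n$ and the value $y_n$ at $z_n$, the deviation $|g_n-f|$ on $J_n$ is not literally bounded by the peak $|y_n-f(z_n)|$ because $f$ varies over $J_n$; define $g_n=f+(\text{bump})$ instead, or shrink $J_n$.

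On the norm estimate you have put your finger on a genuine soft spot: the paper simply writes $|y_n-f^{R_n}(y_n)|<\delta(\epsilon,S_n)$, whereas membership of both points in $\mathcal{V}_n$ only yields $|y_n-f^{R_n}(y_n)|<2\,\delta(\epsilon,S_n)$. You are right that this is the delicate step. However, your proposed repair does not close it: a connected component of $\mathcal{V}_n\cap f^{-R_n}(\mathcal{V}_n)$ may lie entirely near the endpoint $x_0-\delta$ while $f^{R_n}$ carries it entirely near $x_0+\delta$ (so $f^{R_n}(y)-y$ stays close to $2\delta$ throughout the component, with no sign change and no point on the ``same side''), so neither the half-interval argument nor the intermediate-value argument is guaranteed to produce a $y_n$ with $|y_n-f^{R_n}(y_n)|<\delta$. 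The honest fix is simply to carry the factor $2$ (or, equivalently, to take $\mathcal{V}_n$ of radius $\tfrac12\delta(\epsilon,S_n)$), which is harmless for the use of Lemma~\ref{Lema1} in the proof of the main theorem. As it stands, your write-up replaces the paper's unjustified inequality with an argument that can fail, so this step still needs the factor-of-two adjustment rather than a cleverer choice of $y_n$.
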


\begin{proof}
As $y_n$ and $f^{R_n}(y_n)$ belong to the open interval $\mathcal{V}_n$, we may find a $\zeta>0$ such that the intervals $I_n$ (the one that connects these two points inside $[a,b]$, which may be degenerated if $y_n=f^{R_n}(y_n)$) and $J_n$ (which we obtain from $I_n$ adding to it two short segments, with length $\zeta$, on its extremes) are contained in $\mathcal{V}_n$. Consider a continuous bump-function $\phi_n:[a,b]\,\rightarrow\,\mathbb{R}$ so that the restriction of $\phi_n$ to $I_n$ is constant and equal to $1$, and the value of $\phi_n$ in $[a,b]\backslash J_n$ is zero. Denote by $T_n:[a,b]\,\rightarrow\,\mathbb{R}$ the map
$$T_n(t)=t+\bigl[y_n-f^{R_n}(y_n)\bigr]\times\phi_n(t).$$
The function $T_n$ is the identity in the complement of $J_n$ and translates the elements of $J_n$ of an amount that does not exceeds $|y_n-f^{R_n}(y_n)|$.

Define now the map $g_n:[a,b]\,\rightarrow\,\mathbb{R}$ by $g_n=T_n \circ f$. This is a continuous function and, as $R_n$ is the first return of $\mathcal{V}_n$, we have
$$g^{R_n}_n(y_n)=(T_n \circ f)^{R_n}(y_n)=T_n(f^{R_n}(y_n))=$$
$$=f^{R_n}(y_n)+\bigl[y_n-f^{R_n}(y_n)\bigr]\times 1 = y_{n}$$
and, for any positive integer $\ell$ such that $1 \leq \ell < R_n$,
$$g^{l}_n(y_n)=T_n(f^{l}(y_n))=f^{l}(y_n)\neq y_{n}.$$
Besides, $g_n$ coincides with $f$ in $[a,b]\backslash f^{-1}(\mathcal{V}_n)$ since, if $t\notin f^{-1}(\mathcal{V}_n)$, then $\phi(f(t))=0$ and therefore
$$g_n(t)=T_n(f(t))=f(t)+\bigl[y_n-f^{R_n}(y_n)\bigr]\times \phi(f(t))=f(t).$$
Moreover, if $t\in f^{-1}(\mathcal{V}_n)$, then
$$|g_n(t)-f(t)|=\left|f(t)+\bigl[y_n-f^{R_n}(y_n)\bigr]\times \phi(f(t))-f(t)\right|=$$
$$=\left|\bigl[y_n-f^{R_n}(y_n)\bigr]\times\phi(f(t))\right|\leq \left|y_n-f^{R_n}(y_n)\right|<\delta(\epsilon,S_n).$$
So $\|g_n-f\|<\delta(\epsilon,S_n).$
\end{proof}

\medskip

\begin{definition}
Denote by $\mathcal{G}:\st [a,b]\,\rightarrow\,\starr$ the map that assigns to each equivalence class $[{(t_n)}_{n \in \mathbb{N}}]$ in $\st [a,b]$ the class $\bigl[g_1(t_1),g_2(t_2),\cdots,g_n(t_n),\cdots\bigr].$
\end{definition}

\medskip

It is straightforward to verify that:
\begin{lemma}
$\mathcal{G}$ is well defined, internal and continuous.
\end{lemma}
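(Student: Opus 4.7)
The plan is to verify the three assertions in turn; each follows quickly from the construction of $\mathcal{G}$, the uniform bound $\|g_n - f\| < \delta(\epsilon, S_n) \leq 1/n$ provided by Proposition \ref{Prop1}, and the uniform continuity of $f$ on $[a,b]$.

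For well-definedness, I would take two representatives with $[(t_n)] = [(t_n')]$ in $\st[a,b]$, so that $A = \{n : t_n = t_n'\}$ is big; since each $g_n$ is a function, $A \subseteq \{n : g_n(t_n) = g_n(t_n')\}$, and the right-hand set is therefore big, giving $[(g_n(t_n))] = [(g_n(t_n'))]$ in $\starr$. Each $g_n(t_n)$ is real, so the image indeed lies in $\starr$. Internality is then immediate from the construction: $\mathcal{G}$ is specified by a sequence of continuous real-valued maps $(g_n)$ acting coordinatewise on representatives, which is precisely the prescription for an internal hyperfunction, in contrast with external maps such as the shadow.

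For continuity at a point $C = [(c_n)] \in \st[a,b]$, I would pick any $[(s_n)] \approx C$ and estimate termwise
$$|g_n(s_n) - g_n(c_n)| \leq |g_n(s_n) - f(s_n)| + |f(s_n) - f(c_n)| + |f(c_n) - g_n(c_n)|.$$
The outer two terms are each bounded by $\|g_n - f\| < \delta(\epsilon, S_n) \leq 1/n$, so they form a null sequence and hence yield an infinitesimal class in $\starr$. For the middle term, given any real $\tau > 0$, the uniform continuity modulus $\eta(\tau)$ of $f$ satisfies $|x-y| < \eta(\tau) \Rightarrow |f(x) - f(y)| < \tau/2$; since $[(s_n - c_n)]$ is infinitesimal, $\{n : |s_n - c_n| < \eta(\tau)\}$ is big, and so is $\{n : |f(s_n) - f(c_n)| < \tau\}$. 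As $\tau > 0$ was arbitrary, $[(f(s_n) - f(c_n))]$ is infinitesimal, and the triangle inequality delivers $\mathcal{G}([(s_n)]) \approx \mathcal{G}([(c_n)])$, i.e.\ $\mathcal{G}$ sends $\mathrm{halo}(C)$ into $\mathrm{halo}(\mathcal{G}(C))$.

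No step should be a serious obstacle, but it is worth keeping the two sources of smallness separate: the perturbation error $\|g_n - f\|$, controlled by the choice of $\delta(\epsilon, S_n)$ in Definition \ref{delta}, and the input proximity $|s_n - c_n|$, controlled by uniform continuity of the \emph{unperturbed} map $f$. This is exactly why $\delta(\epsilon, S_n)$ was forced to dominate $1/n$ in its construction: it guarantees that the perturbation contribution, taken along the ultrapower, is itself infinitesimal and so cannot spoil the continuity of $\mathcal{G}$.
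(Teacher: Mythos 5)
Your treatment of well-definedness and internality is the same as the paper's, but your continuity argument takes a genuinely different route, and it is worth noting why. The paper disposes of continuity in one line, "by transfer of the continuity of each $g_n$". Strictly speaking, transfer of the (pointwise, or even uniform) continuity of each $g_n$ yields only the internal $\star$-continuity of $\mathcal{G}$ (an $\varepsilon$--$\delta$ statement with hyperreal $\varepsilon$ and $\delta$), whereas the halo condition $\mathcal{G}\bigl(\textit{halo}(\alpha)\bigr)\subseteq \textit{halo}\bigl(\mathcal{G}(\alpha)\bigr)$ the paper actually asserts is the stronger S-continuity, which for an internal map $[(g_n)]$ requires some equicontinuity of the family $(g_n)$ along the ultrafilter; continuity of each $g_n$ separately is not enough (consider $g_n(x)=\sin(nx)$). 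Your decomposition $g_n = f + (g_n - f)$ supplies exactly this missing uniformity: the bound $\|g_n-f\|<\delta(\epsilon,S_n)\leq \tfrac{1}{n}$ makes the perturbation term infinitesimal in the ultrapower, and the uniform continuity of the single map $f$ handles the middle term. So your argument is more detailed and, for the halo formulation of continuity, more faithful to what needs to be proved. Two small points: your final sentence says $\delta(\epsilon,S_n)$ was forced to \emph{dominate} $1/n$, when you mean it is \emph{dominated by} $1/n$ (that is what makes $[(\,\|g_n-f\|\,)_{n\in\mathbb{N}}]$ infinitesimal); and in the middle-term estimate you use the standard ultrafilter characterization of infinitesimals ($\{n: |f(s_n)-f(c_n)|<\tau\}$ big for every real $\tau>0$) rather than the paper's literal-limit footnote, which is the right thing to do.
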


\begin{proof}
Let $(s_n)_{n\in\mathbb{N}}$ and $(t_n)_{n\in\mathbb{N}}$ be sequences in the same equivalence class of $\st [a,b]$. This means that $s_m=t_m$ for a \emph{big} set of positive integers $m$. Then, for the same set, we have $g_m(s_m)=g_m(t_m)$ and, therefore, the classes $\bigl[g_1(s_1),g_2(s_2),\cdots,g_n(s_n),\cdots\bigr]$ and $\bigl[g_1(t_1),g_2(t_2),\cdots,g_n(t_n),\cdots\bigr]$ also coincide. $\mathcal{G}$ is internal since it is the class of the sequence of real maps $(g_n)_{n\in\mathbb{N}}$. Moreover, by transfer of the continuity of each $g_n$, we get the continuity of $\mathcal{G}$: for all $\alpha$ in $\st [a,b]$, we have
$\mathcal{G}\Bigl(\text{\emph{halo}}(\alpha)\Bigr)\subset \text{\emph{halo}} \Bigl(\mathcal{G}(\alpha)\Bigr)$.
\end{proof}

\medskip

Consider now in $\starr$ the equivalence classes of the sequences $(R_n)_{n\in\mathbb{N}}$, $(S_n)_{n\in\mathbb{N}}$ and $(y_n)_{n\in\mathbb{N}}$, say $P=[R_1,R_2,\cdots,R_n,\cdots]$, $Q=[S_1,S_2,\cdots,S_n,\cdots]$ and $y_0=[y_1,y_2,\cdots,y_n,\cdots]$.

\medskip

\begin{definition}
$\mathbb{G}:\starn_0\times\st [a,b]\,\rightarrow\,\starr$ denotes the dynamical system associated with the map $\mathcal{G}$, given by
$$\mathbb{G}\Bigl([n_1,n_2,\cdots,n_k,\cdots],[t_1,t_2,\cdots,t_k,\cdots]\Bigr)=\bigl[g^{n_1}_1(t_1),g^{n_2}_2(t_2),\cdots,g^{n_k}_k(t_k),\cdots\bigr].$$
\end{definition}

Notice that, if $\st n=[n_1,n_2,\cdots,n_k,\cdots]$, $\st m=[m_1,m_2,\cdots,m_k,\cdots]$ and $\st n\oplus \st  m=\bigl[n_1+m_1,n_2+m_2,\cdots,n_k+m_k,\cdots\bigr]$, then
$$\mathbb{G}\Bigl(\st n \oplus \st m,[t_1,t_2,\cdots,t_k,\cdots]\Bigr)=\bigl[g^{n_1+m_1}_1(t_1),g^{n_2+m_2}_2(t_2),\cdots,g^{n_k+m_k}_k(t_k),\cdots\bigr]$$
and
$$\mathbb{G}\Bigl(\st n,\mathbb{G}(\st m,[t_1,t_2,\cdots,t_k,\cdots])\Bigr)=\mathbb{G}\Bigl(\st n,\bigl[g^{m_1}_1(t_1),g^{m_2}_2(t_2),\cdots,g^{m_k}_k(t_k),\cdots\bigr]\Bigr)=$$
$$=\bigl[g_1^{n_1}(g^{m_1}_1(t_1)),g_2^{n_2}(g^{m_2}_2(t_2)),\cdots,g_k^{n_k}(g^{m_k}_k(t_k)),\cdots\bigr]\Bigr)=$$
$$\bigl[g^{n_1+m_1}_1(t_1),g^{n_2+m_2}_2(t_2),\cdots,g^{n_k+m_k}_k(t_k),\cdots\bigr]=\mathbb{G}\Bigl(\st n \oplus \st m,[t_1,t_2,\cdots,t_k,\cdots]\Bigr).$$

\end{section}

\medskip

\begin{section}{Proof of Theorem~\ref{maintheorem}}

\begin{proof}
By construction, as $(R_n)_{n \in \mathbb{N}}$ is a sequence of first returns, the dynamical system $\mathbb{G}$ has a periodic point with period $P$:
$$\mathbb{G}\Bigl(P,y_0\Bigr)=\mathbb{G}\Bigl([R_1,R_2,\cdots,R_n,\cdots],[y_1,y_2,\cdots,y_n,\cdots]\Bigr)=$$
$$=\bigl[g^{R_1}_1(y_1),g^{R_2}_2(y_2),\cdots,g^{R_n}_n(y_n),\cdots\bigr]=[y_1,y_2,\cdots,y_n,\cdots]=y_0.$$
Thus, by Theorem~\ref{theoremSark}, $\mathbb{G}$ has a periodic point $z_0=[z_1,z_2,\cdots,z_n,\cdots]$ with period $Q$. Take an accumulation point $x_1$ of $(z_n)_{n\in\mathbb{N}}$ in $[a,b]$ (observing that $x_1$ may coincide with $x_0$) and consider a sequence of positive integers ${(n_k)}_{k\in\mathbb{N}}$ verifying the condition
$$\forall k \in \mathbb{N} \, \, \, z_{n_k}\in \,\, ]x_1-\frac{1}{2n_k},x_1+\frac{1}{2n_k}[.$$
By Proposition~\ref{Prop1}, for each $n_k$ and $\epsilon=\frac{1}{2n_k}$, we have $\|g_{n_k}-f\|<\delta(\epsilon,S_{n_k})$, and so, by Lemma~\ref{Lema1}, we get $\|g_{n_k}^{S_{n_k}}-f^{S_{n_k}}\|<\epsilon=\frac{1}{2n_k}$. Consequently, as $z_{n_k}$ is a periodic point with period $S_{n_k}$ by the iteration of the map $g_{n_k}$, the neighborhood $\mathcal{W}_k=\,]x_1-\frac{1}{n_k},x_1+\frac{1}{n_k}[$ of $x_1$ returns to itself by $f^{S_{n_k}}$. Hence $x_1$ is the non-wandering point we were looking for.
\end{proof}
\end{section}

\bigskip

\flushleft
\emph{Maria Carvalho} \ \  (mpcarval@fc.up.pt)\\
\emph{Fernando Moreira} \ \  (fsmoreir@fc.up.pt)\\
CMUP and Departamento de Matem\'atica \\
Rua do Campo Alegre, 687 \\ 4169-007 Porto \\ Portugal\\

\end{document}